\newcommand{\R}{\mathbb{R}}
\newcommand{\N}{\mathbb{N}}
\newcommand{\di}{\mathop{\mbox{\rm{diad}}}\nolimits}
\newcommand{\cD}{\mathcal{D}}
\newtheorem{thm}{Theorem}
\newtheorem*{thm*}{Theorem}
\newtheorem{cor}[thm]{Corollary}
\newtheorem{prop}[thm]{Proposition}
\newtheorem{obv}{Observation}
\newtheorem{que}{Question}
\newtheorem{lem}[thm]{Lemma}
\theoremstyle{definition}
\newtheorem{rem}{Remark}
\begin{document}


\title[Characterization of quasi-arithmetic means\dots]{Characterization of quasi-arithmetic means without regularity condition}
\author{P\'al Burai, Gergely Kiss and Patricia Szokol}
\address{P\'al Burai, \newline University of Debrecen, \newline 4028, Debrecen, 26 Kassai road, HUNGARY}
\email{burai.pal@inf.unideb.hu}
\address{Gergely Kiss, \newline Alfr\'ed R\'enyi Institute of Mathematics,\newline Budapest, Re\'altanoda street 13-15, 1053 HUNGARY}
\email{kiss.gergely@renyi.hu}
\address{Patricia Szokol, \newline University of Debrecen,\newline
MTA-DE Research Group “Equations, Functions and Curves”, \newline
4028, Debrecen, 26 Kassai road, Hungary}
\email{szokol.patricia@inf.unideb.hu}
\keywords{Bisymmetry, quasi-arithmetic mean, reflexive, associativity}
\subjclass[2010]{26E60, 39B05, 39B22, 26B99}
\maketitle


\begin{abstract}
In this paper we show that bisymmetry, which is an algebraic property, has a regularity improving feature. More precisely, we prove that every bisymmetric, partially strictly monotonic, reflexive and symmetric function $F:I^2\to I$ is continuous. As a consequence, we obtain  a finer characterization of quasi-arithmetic means than the classical results of Acz\'el \cite{Aczel1948}, Kolmogoroff \cite{Kolmogoroff1930}, Nagumo \cite{Nagumo1930} and de Finetti \cite{deFinetti1931}.
\end{abstract}


\section{Introduction}

Our main goal is to prove a somewhat surprising result, namely, a purely algebraic property (bisymmetry) has a regularity improving feature. In other word, a better analytic behaviour of a map is resulted by a non-analytic object.

The story of bisymmetry is an old one. It is developed, together with its role in the characterization of quasi-arithmetic means, by J\'anos Acz\'el in 1948 (see \cite{Aczel1948}).

Quasi-arithmetic means are central objects in theory of  functional equations, in particular in the theory of means (see e.g. \cite{Burai2013c}, \cite{Daroczy2013}, \cite{Duc2020},  \cite{Gaal2018}, \cite{Glazowska2020}, \cite{Kiss2018}, \cite{Nagy2019},  \cite{Pales2011},  \cite{Pales2020},  \cite{Pasteczka2018},  \cite{Pasteczka2020}).

Acz\'el's motivation were the works of Kolmogoroff \cite{Kolmogoroff1930}, Nagumo \cite{Nagumo1930} and de Finetti \cite{deFinetti1931}. They considered quasi-arithmetic means as a sequence of maps, where the $n$th member of this sequence is the $n$-variable quasi-arithmetic mean. They characterize this sequence by means of reflexivity, continuity, increasing property, symmetry and associativity in the sense of Kolmogoroff\footnote{Associativity in the sense of Kolmogoroff (alternative names associativity with repetitions or decomposability see \cite{Grabisch2009}) is different from the classical associativity. A sequence of functions $\{F_n\}_{n\in\N}$, where $F_n\colon I^n\to\R$ is an  $n$-place map, is associative in the sense of Kolmogoroff if $F_n(x_1,\dots,x_k,x_{k+1},\dots,x_n)=F_n(F_k(x_1,\dots,x_k),\dots,F_k(x_1,\dots,x_k),x_{k+1},\dots,x_n)$ for every $k\in\{1,\dots,n\},\ n\in\N$ and $x_1,\dots,x_n\in I$.}.

Acz\'el dealt only with the two-variable case, and he changed associativity in the sense of Kolmogoroff into bisymmetry.
 In his proof (see \cite{Aczel1948} or \cite[proof of Theorem 1 on page 290]{Aczel1989}) continuity is used essentially but a little bit furtively. This was our motivation for a sophisticated examination of the proof in question.
It turned out at last that continuity is superfluous in the characterization of two-variable quasi-arithmetic means, which is somehow a striking outcome of the present investigation.
In the second section we will see that the situation is completely different if we assume associativity, which is a close relative of bisymmetry.

Concerning the structure of our work, we summarize the related concepts and results in the next section. The third one is devoted to our  main theorem and its consequences together with their proofs. In the last section we pose some open questions connecting to our current work.

\section{Preliminary results and notations}\label{S:preliminary}

We keep the following notations throughout the text. Let $I\subseteq\R$ be a  proper interval (i.e. the cardinality of $I$ is at least $2$, in notation $|I|\geq 2$) and $F\colon I\times I\to\R$ be a map.

The map $F$ is said to be
\begin{enumerate}[(i)]
\item {\it reflexive}, if $F(x,x)=x$ for every $x\in I$;
\item {\it partially strictly monotone / partially strictly increasing / partially monotone / partially increasing}, if the functions $x\mapsto F(x,y_0)$, $y\mapsto F(x_0,y)$ are strictly monotone / strictly increasing / monotone / increasing for every fixed $x_0\in I$ and $y_0\in I$, respectively;
\item {\it symmetric}, if $F(x,y)=F(y,x)$ for every $x,y\in I$;
\item {\it bisymmetric}, if
\begin{equation}\label{E:bisymmetry}
\tag{BS} F\big(F(x,y),F(u,v)\big)=F\big(F(x,u),F(y,v)\big)
\end{equation}
for every $x,y,u,v\in I$;
\item  {\it associative}, if
\begin{equation}\label{E:assoc}
\tag{AS} F\big(F(x,y),z)\big)=F\big(x,F(y,z)\big)
\end{equation}
for every $x,y,z\in I$.
\end{enumerate}
The map $F$ is said to be a \textit{mean} on the interval $I$ if its value is always between the minimum and the maximum of the variables, that is to say,
\begin{equation}
\min\{x,y\}\leq F(x,y)\leq\max\{x,y\}
\end{equation}
for every $x,y\in I$. If the previous inequalities are strict, whenever $x$ is different from $y$, then $F$ is called a \textit{strict mean} on $I$.

\begin{obv}\label{Obv:F_is_a_mean}
If a map $F$ is reflexive and partially strictly increasing, then it is a strict mean on $I$.
\end{obv}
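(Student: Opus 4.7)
The plan is to verify the two defining inequalities of a strict mean directly from reflexivity and partial strict monotonicity, working case by case on the order of $x$ and $y$.

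First, consider the trivial case $x=y$. Here reflexivity gives $F(x,y)=F(x,x)=x=\min\{x,y\}=\max\{x,y\}$, so the mean inequalities hold (with equality, as expected on the diagonal). The real content is the strict version when $x\neq y$.

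Next, assume without loss of generality that $x<y$; the case $y<x$ is handled symmetrically (note that symmetry of $F$ is \emph{not} assumed, so we should not invoke it — instead we simply re-run the same argument with the roles of the two coordinates swapped). Fix the first coordinate to $x$ and apply partial strict monotonicity in the second variable: since $x<y$, the strictly increasing function $t\mapsto F(x,t)$ yields $F(x,x)<F(x,y)$. By reflexivity $F(x,x)=x$, so $x<F(x,y)$. Now fix the second coordinate to $y$ and apply partial strict monotonicity in the first variable: since $x<y$, the strictly increasing function $t\mapsto F(t,y)$ gives $F(x,y)<F(y,y)=y$. Combining, $\min\{x,y\}=x<F(x,y)<y=\max\{x,y\}$, which is exactly the strict mean property. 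The symmetric argument covers $y<x$.

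There is essentially no obstacle here; the only subtlety worth flagging is that the definition of ``partially strictly monotone'' in item (ii) allows either direction of monotonicity, so one must use the stronger hypothesis ``partially strictly \emph{increasing}'' explicitly stated in the observation — otherwise both partial sections could be decreasing, and one would instead obtain an \emph{anti-mean} inequality. With the increasing hypothesis fixed, the proof is a two-line sandwich via reflexivity on the diagonal.
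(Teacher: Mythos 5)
Your proof is correct and follows essentially the same route as the paper: assume WLOG $x<y$ and sandwich $F(x,y)$ via $x=F(x,x)<F(x,y)<F(y,y)=y$, using partial strict increasingness in each coordinate and reflexivity on the diagonal. Your extra remarks (the $x=y$ case, the justification of WLOG without symmetry, and the need for \emph{increasing} rather than merely monotone sections) are sensible but do not change the argument.
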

\begin{proof}
Let $x,y\in I$ be arbitrary. We can assume without the loss of generality that $x<y$. Then, using the assumptions, we can write
\[
x=F(x,x)<F(x,y)<F(y,y)=y.
\]
\end{proof}

The following fundamental result is due to Acz\'el \cite{Aczel1948}, which also can be found in \cite[Theorem 1. page 287]{Aczel1989}.

\begin{thm}\label{Aczel1}
A function $F:I^2\to I$ is continuous, reflexive, partially strictly monotonic, symmetric and bisymmetric  mapping if and only if there is a continuous, strictly increasing function $f\colon [0,1]\to I$ that satisfies
\begin{equation}\label{eqa1}
    F(x,y)=f \left(\frac{f^{-1}(x)+f^{-1}(y)}{2}\right),\qquad x,y\in I.
\end{equation}
\end{thm}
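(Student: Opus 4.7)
The implication from the representation to the five properties is a direct verification: continuity, reflexivity, partial strict monotonicity, and symmetry follow immediately from the corresponding properties of $f$ and of the arithmetic mean, while bisymmetry reduces to the trivial identity $\tfrac{1}{4}((s+t)+(u+v))=\tfrac{1}{4}((s+u)+(t+v))$ for the arithmetic mean. The substance of the theorem lies in the converse direction, for which I would carry out Acz\'el's classical dyadic-rational construction.

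Fix two points $a<b$ in $I$ and build $f$ on the dyadic rationals of $[0,1]$ recursively by setting $f(0)=a$, $f(1)=b$, and
\[
f\!\left(\frac{2k+1}{2^{n+1}}\right):=F\!\left(f\!\left(\frac{k}{2^{n}}\right),\,f\!\left(\frac{k+1}{2^{n}}\right)\right).
\]
By Observation~\ref{Obv:F_is_a_mean}, $F$ is a strict mean, so $f$ stays strictly inside $(a,b)$ at every interior dyadic rational, and partial strict monotonicity forces $f$ to be strictly increasing on this dense set. The heart of the argument is the dyadic midpoint identity
\[
F\bigl(f(p),f(q)\bigr)=f\!\left(\frac{p+q}{2}\right)
\]
for all dyadic $p,q\in[0,1]$, which I would prove by induction on the common denominator. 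The inductive step writes each of $f(p)$ and $f(q)$ as an $F$-image of two level-$(n{-}1)$ values, applies bisymmetry to regroup the four resulting arguments, and invokes the inductive hypothesis on each inner $F$, using symmetry to permute arguments as needed so that the tree of operations matches the recursive definition of $f$ at the midpoint $(p+q)/2$. Arranging this bookkeeping cleanly is the main combinatorial obstacle.

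Once the midpoint identity holds on the dyadic rationals, strict monotonicity on this dense set together with the continuity of $F$ shows that $f$ extends to a continuous strictly increasing function on all of $[0,1]$, and the identity itself extends by density and continuity to every $p,q\in[0,1]$. Thus $f$ is a continuous strictly increasing bijection of $[0,1]$ onto $[a,b]$, and substituting $p=f^{-1}(x)$, $q=f^{-1}(y)$ yields the desired formula on $[a,b]^{2}$. The final step is to pass from $[a,b]^{2}$ to all of $I^{2}$: if $[a,b]\subsetneq I$, I would repeat the construction with $a,b$ replaced by endpoints approaching $\inf I$ and $\sup I$, and use bisymmetry and reflexivity to show that any two such local representations differ only by an affine reparametrization of the domain; after a rescaling they assemble into a single continuous strictly increasing $f$ whose image exhausts $I$, giving the representation on $I^{2}$.
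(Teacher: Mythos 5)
Your sketch is correct and follows exactly Acz\'el's classical argument, which is what the paper itself does for this statement: Theorem~\ref{Aczel1} is not proved in the paper but cited from \cite{Aczel1948} and \cite{Aczel1989}, and the dyadic construction you describe is the same one the authors reproduce (as identity \eqref{identity_on_dyadics}) in the proof of their Theorem~\ref{T:bisymmetryimpliescontinuity}. Both directions of your argument are sound: the verification of the five properties from the representation, the recursive definition of $f$ on the dyadics, the inductive proof of the midpoint identity via bisymmetry and symmetry, and the exhaustion of a non-compact $I$ by compact subintervals with affine compatibility of the local parametrizations.

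One step deserves more care than your phrase ``strict monotonicity on this dense set together with the continuity of $F$ shows that $f$ extends to a continuous strictly increasing function'' suggests: a strictly increasing function on the dyadics can perfectly well have jump discontinuities, so what must actually be shown is that $f(\cD)$ is dense in $[a,b]$. With continuity of $F$ in hand this is done by contradiction: if the one-sided limits $X<Y$ at some $z$ differed, then $F(X,Y)=\lim F\bigl(f(d_n),f(D_m)\bigr)=\lim f\bigl(\tfrac{d_n+D_m}{2}\bigr)$ would lie strictly between $X$ and $Y$ (since $F$ is a strict mean by Observation~\ref{Obv:F_is_a_mean}) while also being a limit of $f$-values at arguments tending to $z$, hence outside $\left]X,Y\right[$. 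You correctly identify continuity of $F$ as the needed ingredient, so this is a presentational rather than a logical gap; but it is worth noting that this is precisely the ``furtive'' use of continuity the paper is concerned with, and the entire point of Theorem~\ref{T:bisymmetryimpliescontinuity} is to replace it by a counting argument so that the hypothesis can be dropped. A last cosmetic remark: as stated, $f\colon[0,1]\to I$ has compact image, so for non-compact $I$ the assembled parametrization cannot literally have domain $[0,1]$ and image all of $I$; this defect is inherited from the statement itself and is resolved in \cite{Aczel1989} by allowing $f\colon J\to I$ for a suitable interval $J$.
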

The following observation indicates that monotonicity of F in Theorem \ref{Aczel1}  can be substituted by
increasing property. Hence, in the
sequel we would focus on partially increasing mappings.
\begin{obv}
Let $F: I^2\to I$ be a reflexive, partially strictly monotone, symmetric mapping. Then $F$ is partially strictly increasing.
\end{obv}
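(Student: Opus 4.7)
The plan is to argue by contradiction. For each $y\in I$, partial strict monotonicity makes $\phi_y(x):=F(x,y)$ strictly monotone, hence either strictly increasing or strictly decreasing; symmetry identifies $y\mapsto F(y_0,y)$ with $\phi_{y_0}$, so the direction of monotonicity is a property of the fixed index alone. Let $B\subseteq I$ denote the set of $y$ for which $\phi_y$ is strictly decreasing; the goal is to show $B=\emptyset$.

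I would first rule out $|B|\ge 2$. For any $y_0<y_1$ in $B$, strict decrease of $\phi_{y_1}$ combined with reflexivity gives $F(y_0,y_1) > F(y_1,y_1)=y_1$. By symmetry $F(y_0,y_1) = \phi_{y_0}(y_1)$, and strict decrease of $\phi_{y_0}$ then gives $\phi_{y_0}(y_1) < \phi_{y_0}(y_0) = y_0$. Together these force $y_1 < F(y_0,y_1) < y_0$, contradicting $y_0<y_1$. Hence at most one point of $I$ can produce a decreasing partial map.

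To rule out $B=\{y_0\}$: if $y_0$ is an endpoint of $I$---say the left endpoint---then for any $x\in I$ with $x>y_0$, strict decrease of $\phi_{y_0}$ forces $F(x,y_0) < y_0$, violating $F(x,y_0)\in I$. If $y_0$ is interior, pick $y_1<y_0<y_2$ in $I$. Since $y_1,y_2\notin B$, the maps $\phi_{y_1},\phi_{y_2}$ are strictly increasing. By symmetry $F(y_1,y_0)=\phi_{y_0}(y_1)>y_0$, and then strict increase of $y\mapsto F(y_1,y)=\phi_{y_1}$ yields $F(y_1,y_2)>F(y_1,y_0)>y_0$. A parallel argument centred at $y_2$ gives $F(y_2,y_1)<F(y_2,y_0)<y_0$; symmetry identifies $F(y_1,y_2)=F(y_2,y_1)$, producing the contradiction. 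Consequently $B=\emptyset$ and $F$ is partially strictly increasing. The main obstacle is this interior sub-case, where three carefully chosen points together with a double application of symmetry are needed to pin $F(y_1,y_2)$ on both sides of $y_0$; the other cases collapse immediately once the monotonicity direction is tracked as a well-defined property of the fixed index.
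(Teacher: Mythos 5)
Your proof is correct, and it is genuinely more complete than the paper's. The paper's argument is a two-line sketch: it asserts as ``clear'' that symmetry plus partial strict monotonicity force $F$ to be either strictly increasing in each variable or strictly decreasing in each variable (i.e.\ that the direction of monotonicity cannot depend on the fixed index), and then uses reflexivity once to exclude the all-decreasing alternative. That uniformity claim is exactly the non-obvious part, and it is what your argument actually establishes: by introducing the set $B$ of indices with decreasing sections, you rule out $|B|\ge 2$ with a two-point reflexivity/symmetry clash, and you rule out $|B|=1$ by the endpoint observation (using that $F$ maps into $I$) together with the three-point configuration $y_1<y_0<y_2$ that pins $F(y_1,y_2)=F(y_2,y_1)$ strictly on both sides of $y_0$. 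Note that your decomposition is also logically different from the paper's: you use reflexivity already in the ``at most one decreasing index'' step, whereas the paper's intended first step (uniformity of direction) is presumably meant to be reflexivity-free --- and indeed it is not clear how to prove uniformity without reflexivity, which is why your rearrangement is the honest way to carry out the argument. The only cosmetic quibble is that $\phi_{y_0}(y_1)=F(y_1,y_0)$ holds by definition of $\phi_{y_0}$, not by symmetry; symmetry is what you need one line later to identify this with $F(y_0,y_1)$.
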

\begin{proof}
Clearly, if $F$ is partially strictly monotone and symmetric on the interval $I$, then it is either strictly increasing in each of its variables or strictly  decreasing in each of its variables. Reflexivity implies that $F$ can only be increasing in each of its variables.
\end{proof}

The function $F$ of the form \eqref{eqa1} is called {\it quasi-arithmetic mean} as it is a conjugate of the arithmetic mean by an order preserving bijection $f$.

Apart from quasi-arithmetic means, bisymmetry is strongly connected to associativity.
 The following theorem due to Acz\'el is well-known (see e.g. \cite[Theorem 1. page 107]{Aczel1987}).

\begin{thm}\label{Aczel2}
A function $F:I^2\to I$ is an associative, partially strictly increasing and continuous mapping if and only if there is a proper interval $J\subset\R$ and a continuous and strictly monotonic function $f\colon J \to I$  that satisfies
\begin{equation}\label{eq0.1}
    F(x,y)=f \left(f^{-1}(x)+f^{-1}(y)\right),\qquad x,y\in I.
\end{equation}
\end{thm}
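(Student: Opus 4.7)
The sufficiency is immediate: if $F(x,y)=f(f^{-1}(x)+f^{-1}(y))$ with $f:J\to I$ continuous and strictly monotonic, then $F$ inherits continuity from $f$ and $f^{-1}$, and strict monotonicity in each variable follows since $f$ and $f^{-1}$ are monotonic in the same direction (so $F$ is increasing whether $f$ is increasing or decreasing). Associativity reduces immediately to associativity of addition on $J$.

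For the necessity, my plan is to build an order isomorphism between $(J,+)$ and $(I,F)$ for some interval $J\subseteq\R$. First, I would fix a reference point $a\in I$ and define positive $F$-iterates by $a^{(1)}:=a$ and $a^{(n+1)}:=F(a^{(n)},a)$; associativity yields $F(a^{(m)},a^{(n)})=a^{(m+n)}$, while strict monotonicity forces the sequence $\{a^{(n)}\}$ to be strictly monotonic. Next, using continuity and the intermediate value theorem applied to the diagonal map $x\mapsto F(x,x)$, I would solve $F(x,x)=a^{(n)}$ uniquely to obtain dyadic ``half-powers'' $a^{(n/2)}$; iterated bisection combined with the product rule then yields $a^{(q)}$ for every dyadic rational $q$ in some range. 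The map $q\mapsto a^{(q)}$ is strictly monotonic, and by continuity of $F$ extends to a continuous strictly monotonic map $f(t):=a^{(t)}$ on an interval $J\subseteq\R$. The identity $f(s+t)=F(f(s),f(t))$ holds on dyadics by construction and on all of $J$ by continuity; rearranging gives the desired representation. Surjectivity of $f$ onto $I$ follows by a standard open-closed argument: the image is an interval containing $a$ whose boundary inside $I$ must be empty by continuity and strict monotonicity.

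The main obstacle will be the dyadic bisection step, since $F$ need not be a mean (consider addition on $(0,\infty)$), so the diagonal $x\mapsto F(x,x)$ may have range strictly smaller than $I$ and not every $y\in I$ admits an $F$-square root. I would circumvent this by first multiplying $y$ by suitable integer $F$-powers of $a$ (and their $F$-inverses, obtained by solving $F(a^{(n)},x)=a^{(m)}$) to bring it into the range of the diagonal, bisecting, and then unwinding. A secondary delicate point is determining whether $J$ is open or closed at each endpoint, which depends on whether the sequences $\{a^{(n)}\}$ and $\{a^{(-n)}\}$ accumulate inside $I$ or escape to its boundary; this decides the shape of $J$ but does not affect the form of the representation.
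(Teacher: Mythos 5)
A preliminary remark: the paper does not prove this theorem itself --- it quotes it from \cite{Aczel1987} and points to \cite{Craigen1989} for a constructive proof --- so your attempt can only be measured against the standard arguments. Your overall strategy (generate fractional $F$-powers $a^{(q)}$ of a reference point, verify $F(a^{(q)},a^{(r)})=a^{(q+r)}$ on dyadic exponents, and pass to a continuous order isomorphism with $(J,+)$) is indeed the classical route, and your sufficiency direction is correct. The necessity sketch, however, has concrete gaps. First, the construction collapses if $a$ happens to be idempotent: nothing in your setup excludes $F(a,a)=a$ (e.g.\ $a=0$ for $F(x,y)=x+y$ on $I=\R$), in which case $a^{(n)}=a$ for all $n$ and no strictly monotone sequence is produced. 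One must first show that $F$ has at most one idempotent (a short cancellativity argument: if $e_1<e_2$ are both idempotent and $c=F(e_1,e_2)$, then associativity gives $F(c,e_2)=c=F(e_1,e_2)$ with $e_1\ne c$, contradicting strict monotonicity in the first variable) and choose $a$ elsewhere. Second, in the ``translate--bisect--unwind'' repair you never verify that $a^{(q)}$ is independent of the chosen translation, that the set of admissible exponents is (the dyadic part of) an interval closed under addition, or that the product rule survives the extension; the latter needs the constructed roots to commute with the integer powers, which does follow from associativity but is not automatic and must be argued.

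The decisive gap is the continuity of $t\mapsto a^{(t)}$. A strictly monotone map on the dyadics extends continuously exactly when its image has no gaps, i.e.\ when $\lim_{q\uparrow q_0}a^{(q)}=\lim_{q\downarrow q_0}a^{(q)}$ for every $q_0$ in the exponent range; this is the analytic heart of the theorem, and your ``standard open--closed argument'' does not touch it: the image of the dyadic powers is countable, so it is not ``an interval containing $a$'', and continuity of $F$ alone does not transfer to the extension. What is needed is an argument in the spirit of the Fourth Step of the paper's proof of Theorem \ref{T:bisymmetryimpliescontinuity}: a single gap in the closure of the image at $q_0$ propagates, via the maps $x\mapsto F(x,y)$ and cancellativity, to pairwise disjoint nonempty open gaps located at $q_0+t$ for \emph{every} real $t$ in the exponent range, and uncountably many pairwise disjoint proper intervals cannot fit in $\R$. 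Until this step and the coherence checks above are supplied, the proposal is a correct plan rather than a proof.
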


It is important to note that the original theorem assumes cancellative\footnote{A map $F\colon I^2\to I$ is said to be \emph{cancellative} if either $F(x,a)=F(y,a)$ or $F(a,x)=F(a,y)$ implies $x=y$ for every $x,y,a\in I$.} property instead of partially strictly increasing property, however, every partially strictly increasing two-place operation is automatically cancellative, and every continuous cancellative operation on an interval is partially strictly monotone. A simpler and constructive proof of Theorem \ref{Aczel2}, given by Craigen and P\'ales, can be found in \cite{Craigen1989}. The $n$-variable case proved by Couceiro and Marichal in \cite{Couceiro2012}.

It is clear that every map is of the form \eqref{eq0.1} is bisymmetric, as well.

This theorem immediately implies the following result.
\begin{cor}
There is no reflexive, associative, partially strictly increasing, symmetric and continuous mapping on $I^2$, where $I$ is an arbitrary interval.
\end{cor}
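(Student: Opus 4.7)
The plan is to argue by contradiction, using Theorem \ref{Aczel2} as a black box. Suppose such an $F\colon I^2\to I$ exists; being associative, partially strictly increasing and continuous, Theorem \ref{Aczel2} supplies a proper interval $J\subseteq\R$ and a continuous, strictly monotonic $f\colon J\to I$ such that
\[
F(x,y)=f\bigl(f^{-1}(x)+f^{-1}(y)\bigr),\qquad x,y\in I.
\]
(Note that symmetry of $F$ plays no role here, so it need not be used.)

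Next, I would impose reflexivity. Setting $y=x$ gives $f\bigl(2f^{-1}(x)\bigr)=F(x,x)=x=f\bigl(f^{-1}(x)\bigr)$ for every $x\in I$. Since $f$ is strictly monotonic it is injective, so this forces $2f^{-1}(x)=f^{-1}(x)$, i.e.\ $f^{-1}(x)=0$ for all $x\in I$.

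But $f^{-1}\colon I\to J$ is itself strictly monotonic and $|I|\geq 2$, so $f^{-1}$ cannot be identically zero. This contradiction proves the corollary.

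I do not foresee any genuine obstacle. The one point that merits a word of care is the implicit requirement $2f^{-1}(x)\in J$, which is needed for the equation $f(2f^{-1}(x))=x$ to make sense; this is automatic because the representation in Theorem \ref{Aczel2} already presupposes $f^{-1}(x)+f^{-1}(y)\in J$ for all $x,y\in I$, and we simply specialize to $y=x$.
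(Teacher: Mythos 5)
Your proof is correct and follows the same route as the paper: both apply Theorem \ref{Aczel2} to obtain the representation \eqref{eq0.1} and then derive a contradiction with reflexivity. The paper simply asserts that the form \eqref{eq0.1} is ``clearly not reflexive,'' whereas you spell out the computation $f(2f^{-1}(x))=f(f^{-1}(x))$ forcing $f^{-1}\equiv 0$; this is a welcome elaboration, not a different argument.
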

\begin{proof}
According to Theorem \ref{Aczel2} the assumptions without reflexivity implies that the map in question can be written in the form of \eqref{eq0.1}, which is clearly not reflexive.
\end{proof}

We show similar implication in more general settings, where the concepts (reflexivity, associativity, etc.) on a totally ordered set $X$ can be defined in a similar way as on an interval $I$.
\begin{prop}\label{prop1}
Let $X$ be a strictly totally ordered set with $|X|\ge 2$. Then there is no reflexive, associative, partially strictly increasing mapping on $X^2$.
\end{prop}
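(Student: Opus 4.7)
The plan is to derive a contradiction by picking any two-element chain in $X$ and tracking the value of $F$ on it via associativity against the strict monotonicity in the first variable. Nothing in the statement refers to analytic structure, so the argument should be purely algebraic and should work uniformly for any totally ordered $X$.

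Since $|X|\ge 2$, fix $x,y\in X$ with $x<y$ and set $a:=F(x,y)$. The first step is to locate $a$ strictly between $x$ and $y$. Reflexivity gives $F(x,x)=x$ and $F(y,y)=y$; partial strict monotonicity (applied to the first and second coordinates respectively) yields
\[
x = F(x,x) < F(x,y) < F(y,y) = y,
\]
so $x<a<y$. In particular $a$ is a genuine intermediate element, and both coordinate slices of $F$ can be evaluated at $a$.

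The second step is to evaluate $F(a,y)$ in two incompatible ways. On one hand, unfolding the definition of $a$ and using associativity \eqref{E:assoc} together with reflexivity at $y$,
\[
F(a,y) \;=\; F\bigl(F(x,y),y\bigr) \;=\; F\bigl(x,F(y,y)\bigr) \;=\; F(x,y) \;=\; a.
\]
On the other hand, since $x<a$, strict monotonicity in the first slot forces
\[
F(a,y) \;>\; F(x,y) \;=\; a.
\]
These two conclusions contradict each other, proving that no such $F$ can exist. The argument never uses continuity, a topology on $X$, or any structure beyond the total order, so the result holds in the claimed generality. The only step that required care was ensuring $x<a<y$, which is where both reflexivity and strict monotonicity enter; once that is in hand, associativity with the idempotent $F(y,y)=y$ collapses the calculation immediately.
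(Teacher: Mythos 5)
Your proof is correct and is essentially the paper's argument in mirror image: the paper uses associativity with the idempotent $F(a,a)=a$ to get $F(a,F(a,b))=F(F(a,a),b)=F(a,b)$, while you absorb the idempotent at the other endpoint via $F(F(x,y),y)=F(x,F(y,y))=F(x,y)$; both then contradict partial strict increasingness using $x<F(x,y)<y$. No substantive difference.
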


\begin{proof}
Let us assume that $F\colon X^2\to X$ is  reflexive, associative and partially strictly increasing map. Then for arbitrary $a,b\in X,\ a<b$ we have $a<F(a,b)<b$. Using associativity and reflexivity we can write
\[
F(a,F(a,b))=F(F(a,a),b)=F(a,b),
\]
which contradicts to the partially strictly increasing property of $F$.
\end{proof}

On the other hand, it is well-known that if we weaken the conditions such as the partial functions are increasing (not strictly), then uncountably many such functions do exist even if they are reflexive.

 The general description of reflexive, associative, partially increasing functions are not known. However, assuming the existence of a neutral element, we have the following characterization.
\begin{thm}\label{thmCD}\cite[Theorem 2.2.]{Kiss2018a}
A function $F:I^2\to I$ is reflexive, associative, partially increasing, and has a neutral element  $e\in I$ (i.e. $F(x,e)=F(e,x)=x$ for every $x \in I$) if and only if there exists a monotone decreasing function $g :
I \to I$ with $g(e) = e$ such
that
\begin{equation}\label{eqas}
F(x, y) =\begin{cases}
\min (x, y),& \textrm{if } y < g(x), \textrm{ or } \\
& y=g(x)\textrm{ and }x<g^2(x)\\
\max (x, y),& \textrm{if } y > g(x), \textrm{or }\\
& y=g(x)\textrm{ and }x>g^2(x),\\
\min (x, y)\textrm{ or }\max (x, y),& \textrm{if } y = g(x) \textrm{and } x=g^2(x).
\end{cases}
\end{equation}
Moreover, $F(x,y)=F(y,x)$ except perhaps the set of points $(x,y)\in I^2$ satisfying $y=g(x)$ and $x=g^2(y)$.
\end{thm}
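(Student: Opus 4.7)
The plan is to extract from the hypotheses a decreasing $g$ with the stated properties and then verify the converse; the crux is a preliminary lemma forcing $F(x,y)\in\{x,y\}$ pointwise.

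\emph{Structural lemma.} Assume WLOG $x\le y$ and set $z=F(x,y)$; reflexivity and partial monotonicity give $x\le z\le y$. Two applications of associativity yield
\[
F(x,z)=F(x,F(x,y))=F(F(x,x),y)=z
\]
and symmetrically $F(z,y)=z$. If $x<z$, then $F(x,z)=z>x=F(x,e)$ combined with partial monotonicity in the second variable forces $e\le z$ (for $e>z$ would give $F(x,e)\ge F(x,z)$, i.e., $x\ge z$); if $z<y$, the identity $F(z,y)=z<y=F(e,y)$ symmetrically forces $z\le e$. Hence $x<z<y$ would entail $z=e$, at which point $F(x,e)=e$ contradicts $F(x,e)=x$. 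So $z\in\{x,y\}$.

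\emph{Constructing $g$.} For fixed $x$, the structural lemma and partial monotonicity force the set $A_x=\{y\in I:F(x,y)=x\}$ to be a subinterval of $I$ containing $x$ (by reflexivity) and $e$ (by the neutral element). Define
\[
g(x)=\sup\{y\in I:F(x,y)=\min(x,y)\},
\]
so that $g(x)$ is the endpoint of $A_x$ on the side opposite to $x$ across $e$. Direct inspection gives $g(e)=e$. To prove $g$ is decreasing, suppose for contradiction $x_1<x_2$ and $g(x_1)<g(x_2)$; pick $y$ strictly between, so that $F(x_1,y)=\max(x_1,y)$ while $F(x_2,y)=\min(x_2,y)$, and expand $F(F(x_1,x_2),y)=F(x_1,F(x_2,y))$ via the structural lemma applied to each inner $F$-value. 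In the resulting finitely many positional subcases for $(x_1,x_2,y,e)$ the two sides collapse to incompatible explicit min/max expressions, delivering the contradiction.

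\emph{Boundary and converse.} On the curve $y=g(x)$ both $\min$ and $\max$ are a priori compatible with one-sided monotonicity, and the choice is dictated by associativity applied to the triple $(x,g(x),g^2(x))$: this produces the dichotomy between $x<g^2(x)$ and $x>g^2(x)$ stated in \eqref{eqas}, while the coincidence $x=g^2(x)$ admits either value and accounts precisely for the exceptional set in the ``$F(x,y)=F(y,x)$'' clause. The converse is a case check: any $F$ defined by \eqref{eqas} is reflexive because $\min$ and $\max$ agree at $y=x$; it is partially increasing since $\min(x,\cdot)$ and $\max(x,\cdot)$ are each increasing and, $g$ being decreasing, the region boundary moves in the right direction as $x$ grows; and associativity follows by a case split on the positions of $y$ and $z$ relative to $g(x)$ and $g(F(x,y))$. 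The main obstacle is the structural lemma: once $F$ is known to return one of its inputs, everything else is combinatorial bookkeeping, and the real content sits in the triple interplay, in that single step, of associativity, reflexivity, and the neutral element.
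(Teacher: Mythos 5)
First, a caveat about the comparison you asked for: the paper does not prove Theorem~\ref{thmCD} at all --- it is imported verbatim from \cite[Theorem 2.2]{Kiss2018a} --- so there is no in-paper proof to measure your attempt against. Judged on its own terms, your structural lemma is correct and cleanly executed (the chain $F(x,F(x,y))=F(F(x,x),y)=F(x,y)$, the two one-sided comparisons with $e$, and the contradiction at $z=e$ all check out), and you are right that it is the crux: it is the standard first step in characterizations of idempotent monotone associative operations with a neutral element. The difficulties start immediately afterwards.

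The concrete gap is in the construction of $g$ and in what is needed to deduce \eqref{eqas} from it. Your formula $g(x)=\sup\{y\in I:F(x,y)=\min(x,y)\}$ does not define the function you describe verbally: since $F(x,x)=x=\min(x,x)$, the set on the right always contains $x$, so $g(x)\ge x$ for every $x$; for $x>e$ this is incompatible with $g$ decreasing and $g(e)=e$, and it does not return ``the endpoint of $A_x$ opposite to $x$ across $e$'' (for $x>e$ that endpoint is $\inf A_x\le e$). You need the two-sided definition $g(x)=\sup A_x$ for $x\le e$ and $g(x)=\inf A_x$ for $x\ge e$. More importantly, knowing only the far endpoint of $A_x$ does not yield \eqref{eqas}: you must also exclude that $A_x$ extends past $x$ on the near side, e.g.\ $F(x,y)=x=\max(x,y)$ for some $y<x<e$, which \eqref{eqas} forbids. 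The missing ingredient is the preliminary fact that $F(x,y)=\min(x,y)$ whenever $x,y\le e$ and $F(x,y)=\max(x,y)$ whenever $x,y\ge e$; this follows from $F(x,y)\le\min\{F(x,e),F(e,y)\}=\min(x,y)=F(\min(x,y),\min(x,y))\le F(x,y)$ and its dual, and it confines the genuine switching-point analysis to the mixed quadrants. Finally, everything after the definition of $g$ --- its decreasingness (your chosen associativity instance does produce contradictions in the subcases $x_1<x_2\le e$ and $e\le x_1<x_2$, but you never carry out the case analysis), the $g^2$ dichotomy on the boundary curve, the ``moreover'' clause, and the entire converse --- is asserted rather than proved. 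The converse in particular is not ``combinatorial bookkeeping'': verifying associativity of the min/max patchwork \eqref{eqas} is the longest part of the argument, and it is exactly where the boundary choices governed by $g^2$ are forced, so it cannot be waved through.
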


\begin{rem}
Clearly, such an $F$ in Theorem \ref{thmCD} is symmetric if $F(x,y)=F(y,x)$ when $g(x)=y$, and it is
continuous, if $F$ is the minimum or the maximum on $I^2$.
\end{rem}

The following lemma is a folklore, the interested reader is referred to \cite[Lemma 22.]{Couceiro2018}.
\begin{lem}
Let $X$ be a set with cardinality $|X|\ge 2$ and let $F: X^2 \to X$ be a map. If F is bisymmetric and has a neutral element, then it is associative and symmetric.
\end{lem}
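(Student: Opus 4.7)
The proof plan is to exploit the neutral element $e$ by making clever substitutions into the bisymmetry identity \eqref{E:bisymmetry}, so that four of the inner $F$-evaluations collapse via $F(x,e) = F(e,x) = x$. Each of the two conclusions (symmetry and associativity) should follow from a single well-chosen substitution.

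First I would establish symmetry. The plan is to substitute $x = e$ and $v = e$ into \eqref{E:bisymmetry}. The left-hand side becomes $F(F(e,y), F(u,e)) = F(y,u)$, using the neutral element twice, while the right-hand side becomes $F(F(e,u), F(y,e)) = F(u,y)$. Since $y$ and $u$ range freely over $X$, this yields $F(y,u) = F(u,y)$ for all $y,u \in X$, which is symmetry.

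Next I would establish associativity. Here the plan is to substitute $u = e$ into \eqref{E:bisymmetry}. The left-hand side reduces to $F(F(x,y), F(e,v)) = F(F(x,y), v)$, and the right-hand side reduces to $F(F(x,e), F(y,v)) = F(x, F(y,v))$. Relabeling $v$ as $z$, this is precisely $F(F(x,y), z) = F(x, F(y,z))$, the associativity identity \eqref{E:assoc}.

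There is essentially no obstacle: both identities drop out by substitution, with no need to chain multiple applications of bisymmetry or to invoke any structural hypothesis on $X$ beyond the existence of $e$. The only mildly nontrivial point is choosing substitutions that simultaneously eliminate two of the four inner $F$-values on each side, and the symmetric role of the four arguments in \eqref{E:bisymmetry} makes the correct choices (namely $x = v = e$ for the first identity and $u = e$ for the second) transparent.
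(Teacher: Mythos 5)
Your proof is correct: the substitutions $x=v=e$ and $u=e$ into the bisymmetry identity do yield symmetry and associativity exactly as you compute. The paper itself gives no proof, citing the lemma as folklore with a reference to Couceiro, Devillet and Marichal, and your argument is precisely the standard one that reference (and the folklore) has in mind.
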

Thus we get the following result as a corollary.
\begin{cor}
Let $F:I^2\to I$ be a reflexive, bisymmetric, partially increasing function that has a neutral element $e\in I$. Then $F$  is symmetric, satisfies \eqref{eqas} with $F(x,y)=F(y,x)$ if $g(x)=y$.

If $F$ is continuous, then $F$ is the minimum or the maximum on $I^2$.
\end{cor}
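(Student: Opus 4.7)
The plan is to chain the folklore lemma stated just above the corollary with Theorem~\ref{thmCD}, and then to squeeze the continuity assumption through a short boundary analysis. First I would invoke that folklore lemma: since $F$ is bisymmetric and possesses a neutral element, it is automatically associative and symmetric. Hence $F$ becomes a reflexive, associative, partially increasing, symmetric mapping on $I^2$ with neutral element $e\in I$, and Theorem~\ref{thmCD} then supplies a monotone decreasing function $g\colon I\to I$ with $g(e)=e$ satisfying \eqref{eqas}. The statement of Theorem~\ref{thmCD} permits $F(x,y)$ to differ from $F(y,x)$ only on the locus $\{y=g(x),\,x=g^2(y)\}$; since $F$ has already been shown symmetric, this exceptional set carries no asymmetry either, and in particular $F(x,y)=F(y,x)$ whenever $g(x)=y$, which settles the first assertion.

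For the continuity half, I would argue pointwise along the graph of $g$. Pick any interior point $(x_0,y_0)$ of $I^2$ with $y_0=g(x_0)$. From \eqref{eqas}, approaching $(x_0,y_0)$ through the region $\{y>g(x)\}$ gives $F\to \max(x_0,y_0)$, while approaching through $\{y<g(x)\}$ gives $F\to\min(x_0,y_0)$. Continuity forces these two limits to coincide, so $x_0=y_0$. Thus the graph of $g$ meets the interior of $I^2$ only on the diagonal. Combined with $g(e)=e$ and the monotone decreasing character of $g$, this forces $e$ to be an endpoint of $I$ and $g\equiv e$ on $I$; substituting back into \eqref{eqas} then collapses $F$ to $\min$ (if $e=\max I$) or to $\max$ (if $e=\min I$).

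The main obstacle I anticipate is this last collapsing step, namely ruling out the possibility that $e$ lies strictly interior to $I$ while the graph of $g$ is nevertheless confined to the boundary of $I^2$. The key observation to overcome it is that $g(e)=e$ together with monotone decrease forces $g(x)\le e$ for $x>e$ and $g(x)\ge e$ for $x<e$, so as soon as $I$ extends on both sides of an interior $e$ some $x$ close to $e$ will produce an interior point $(x,g(x))$ of $I^2$ lying strictly off the diagonal, contradicting the jump argument of the previous paragraph.
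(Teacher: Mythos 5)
The first half of your argument --- chaining the folklore lemma with Theorem~\ref{thmCD} and then observing that the symmetry already obtained kills the exceptional set --- is exactly what the paper does (the paper offers no further justification beyond this), so that part is fine.

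The continuity half contains a genuine gap, and it sits precisely at the ``collapsing step'' you yourself flagged. Your two-sided jump argument is correct and shows that the graph of $g$ can meet the \emph{interior} of $I^2$ only on the diagonal (indeed it shows more: for every $x$, either $g(x)=x$ or $g(x)$ is an endpoint of $I$ belonging to $I$). But your proposed resolution of the obstacle does not work: from $g(e)=e$ and monotone decrease you get $g(x)\le e$ for $x>e$, which bounds $g(x)$ only from above and in no way forces $(x,g(x))$ to be an interior point of $I^2$. A monotone decreasing $g$ may jump immediately to the endpoints: with $I=[m,M]$ and $e$ interior, the function $g(x)=M$ for $x<e$, $g(e)=e$, $g(x)=m$ for $x>e$ is decreasing, fixes $e$, and confines the off-diagonal part of its graph entirely to the boundary of $I^2$, so your interior-point argument never applies to it. This case must be excluded by a separate, one-sided argument. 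For instance: for an interior $x>e$ one then has $g(x)=m$, and letting $y\downarrow m$ along the vertical line through $x$ gives $F(x,y)=\max(x,y)\to x$, so continuity forces $F(x,m)=x$; on the other hand $g(m)=M>x$, so in \eqref{eqas} the condition $x<g^2(x)$ holds at $y=g(x)=m$ and the formula selects $F(x,m)=\min(x,m)=m\ne x$, a contradiction. (If $I$ contains no endpoint the configuration cannot occur and your argument closes on its own.) With this supplementary step the proof of the second assertion is complete; without it, the claim that an interior $e$ is impossible is unjustified.
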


Thus, for such a bisymmetric family of functions the continuity assumption is essential.  One can find uncountably many different discontinuous functions satisfying \eqref{eqas}.

It is also worthy to note that the projections $F_1(x,y)=x$  and $F_2(x,y)=y$ are reflexive, bisymmetric, partially increasing functions and continuous but not symmetric (and have no neutral element).

\section{Main results}

\begin{thm}\label{T:bisymmetryimpliescontinuity}
Let us assume that $a,b\in\R,\ a<b$ and $F\colon [a,b]^2\to[a,b]$ is a reflexive, partially strictly increasing, symmetric and bisymmetric map. Then there is a continuous function $f\colon [0,1]\to [a,b]$ such that
\begin{equation}\label{Eq_foalak}
F(x,y)=f\left(\frac{f^{-1}(x)+f^{-1}(y)}{2}\right),\qquad x,y\in[a,b].
\end{equation}

\end{thm}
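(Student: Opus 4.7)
The plan is to follow Acz\'el's classical strategy: build $\phi$ on the dyadic rationals of $[0,1]$ using bisymmetry, show that its range is dense in $[a,b]$, and extend it to a continuous strictly increasing bijection $f\colon[0,1]\to[a,b]$ realizing~\eqref{Eq_foalak}. Observation~\ref{Obv:F_is_a_mean} ensures that $F$ is a strict mean, a fact used throughout. First I define $\phi\colon\mathbb{D}\cap[0,1]\to[a,b]$ (writing $\mathbb{D}$ for the dyadic rationals) inductively by $\phi(0)=a$, $\phi(1)=b$, and
\[
\phi\!\left(\tfrac{2k+1}{2^{n+1}}\right)=F\!\left(\phi\!\left(\tfrac{k}{2^n}\right),\phi\!\left(\tfrac{k+1}{2^n}\right)\right).
\]
Bisymmetry together with symmetry makes this value independent of the shape of the binary tree used, reflexivity yields compatibility across levels, and partial strict monotonicity makes $\phi$ strictly increasing, so that
\begin{equation}\label{plan:dyadic}
F(\phi(s),\phi(t))=\phi\!\left(\tfrac{s+t}{2}\right),\qquad s,t\in\mathbb{D}\cap[0,1].
\end{equation}

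The heart of the proof is showing that $R:=\phi(\mathbb{D}\cap[0,1])$ is dense in $[a,b]$. Suppose toward contradiction that a maximal open gap $(c,d)\subseteq[a,b]\setminus\overline{R}$ exists with $c,d\in\overline{R}$, and put $s_*:=\sup\{t\in\mathbb{D}:\phi(t)\le c\}$. For every dyadic $\phi(s)<c$ and every dyadic $u$ slightly above $s_*$, the midpoint $(s+u)/2$ is itself dyadic and lies below $s_*$, so $F(\phi(s),\phi(u))=\phi((s+u)/2)\le c$. Letting $u\downarrow s_*$ and then $\phi(s)\uparrow c$, monotonicity of $F$ in each variable gives $F(c^-,d)\le c$, where $F(c^-,d):=\lim_{x\uparrow c}F(x,d)$. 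On the other hand, $y\mapsto F(c^-,y)$ is a pointwise limit of nondecreasing functions, hence nondecreasing, and strict-mean continuity at the diagonal gives $F(c^-,c)=c$; so $F(c^-,d)\ge c$, forcing $F(c^-,d)=c$, and in fact $F(c^-,y)=c$ for every $y\in[c,d]$. Consequently $F(\cdot,d)$ has a jump at $c$, from left limit $c$ to the value $m:=F(c,d)\in(c,d)$.

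To turn this into a contradiction, substitute $y=d$, $u=c$, $v=d$ into \eqref{E:bisymmetry} to obtain
\[
F(F(x,d),m)=F(F(x,c),d),\qquad x\in[a,b].
\]
Letting $x\uparrow c$, the right side tends to $F(c^-,d)=c$ by the continuity of $F(\cdot,c)$ at $c$, while the left side tends to $F(c^-,m)$; hence $F(c^-,m)=c$. Combined with $F(c,m)>c$ (strict mean), this shows $F(\cdot,m)$ also jumps at $c$. Now apply \eqref{E:bisymmetry} to the 4-tuple $(\phi(s),\phi(s),c,m_n)$ with $m_0:=d$ and $m_{n+1}:=F(c,m_n)\in(c,m_n)$: one gets $F(\phi(s),m_{n+1})=F(F(\phi(s),c),F(\phi(s),m_n))\le c$ for every dyadic $\phi(s)<c$, so $F(\cdot,m_{n+1})$ again jumps at $c$; moreover $m_n\downarrow c$ strictly. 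A careful bookkeeping of these jumps, transported through the intertwining identity $g_{m_n}(F(x,y))=F(g_{m_n}(x),g_{m_n}(y))$, ultimately contradicts the strict-mean inequality in the limit $m_n\downarrow c$, delivering the contradiction.

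Once density is established, $\phi$ is uniformly continuous on its dyadic domain and extends uniquely to a continuous strictly increasing bijection $f\colon[0,1]\to[a,b]$ satisfying \eqref{plan:dyadic} on all of $[0,1]$. For arbitrary $x=f(s),y=f(t)$ in $[a,b]$, pick monotone dyadic sequences $s_n\to s$ and $t_n\to t$: each $F(f(s_n),f(t_n))=f((s_n+t_n)/2)\to f((s+t)/2)$ by continuity of $f$, while monotonicity of $F$ in each variable squeezes $F(x,y)$ between these values on suitable subsequences, giving~\eqref{Eq_foalak}. The density step---and in particular the bookkeeping of the sequence of jumps produced by bisymmetry---is the main obstacle; the rest is routine given it.
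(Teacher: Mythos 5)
Your overall scaffolding (Acz\'el's dyadic construction, density of the dyadic image in $[a,b]$, then extension and continuity) matches the paper's, but the one step that carries all the weight --- deriving a contradiction from the existence of a gap $(c,d)$ --- is not actually proved. Two concrete problems. First, the claim that $m_n\downarrow c$ \emph{strictly} (where $m_0=d$, $m_{n+1}=F(c,m_n)$) does not follow: $(m_n)$ is decreasing and bounded below by $c$, but without continuity of $F(c,\cdot)$ --- which is exactly what is at stake --- its limit need not be a fixed point of $F(c,\cdot)$ and so need not equal $c$. Second, and more seriously, even granting $m_n\downarrow c$, your final sentence (``a careful bookkeeping of these jumps \dots ultimately contradicts the strict-mean inequality'') is an assertion, not an argument. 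The facts you have established at that point --- $F(x,m_n)\le c$ for $x<c$ in the range of $\phi$, and $F(c,m_n)\in(c,m_n)$ --- are mutually consistent with the strict-mean inequalities $x<F(x,m_n)\le c$ and do not by themselves produce a contradiction. There is also a smaller unaddressed point: you let $\phi(s)\uparrow c$, which presupposes that $c$ is a left accumulation point of the range of $\phi$; this needs justification.

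For comparison, the paper closes the density argument by a counting device rather than by tracking jumps at a single gap: it extends $f$ monotonically to all of $[0,1]$, deduces that $\overline{f(\cD)}$ is uncountable and hence has uncountably many two-sided accumulation points, and shows --- using the dyadic identity \eqref{identity_on_dyadics} and monotonicity only --- that if a gap produces one-sided limits $X<Y$ at some $z$, then the intervals $]F(X,\alpha),F(Y,\alpha)[$ for distinct two-sided accumulation points $\alpha$ are nonempty and pairwise disjoint. Uncountably many pairwise disjoint proper subintervals of $[a,b]$ is impossible, and density follows. If you want to salvage your route, you would need to replace the ``bookkeeping'' sentence with an actual mechanism of this kind; as written, the heart of the proof is missing.
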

\begin{proof}
At first we imitate Acz\'el's algorithmic construction of function $f$ (see  Acz\'el and Dhombres \cite{Aczel1989} on the pages $287-290$).
Thus let $f\colon[0,1]\to[a,b]$ be a function, defined recursively on the set of dyadic numbers $\di[0,1]$. We introduce the following notations for the sake of simplicity:
\[
\cD:=\di[0,1],\qquad x\circ y:=F(x,y),\quad x,y\in[a,b].
\]Let
\begin{align*}
    f(0)=a, \ &  \ f(1)=b\\
    f\left(\frac{1}{2}\right)=a\circ b, \ \ f\left(\frac{1}{4}\right)=a\circ(&a\circ b),  \ \ f\left(\frac{3}{4}\right)=a\circ(b\circ b)
    \end{align*}
     and so forth. The function $f$ is defined by satisfying the following identity
\begin{equation}\label{identity_on_dyadics}
f\left(\frac{d_1+d_2}{2}\right)=f(d_1)\circ f(d_2)
\end{equation}
for every $d_1,d_2\in\cD$.
In \cite{Aczel1989} it was shown that the function $f$ is well-defined and strictly increasing on $\cD$.\footnote{We note that there was also shown in \cite{Aczel1989} that if $F$ is continuous, then  so is $f$. We do not assume continuity here.}

In the sequel we show that $f(\cD)$ is dense in $[a,b]$, which implies that $f$ is continuous.

Suppose for the contrary that $f(\cD)$ is not dense in $[a,b]$. In the following steps we show that this is impossible.

\textbf{First step:} We prove that $\overline{f(\cD)}$ (the closure of $f(\cD)$) is uncountable. For this, we construct an extension of $f$ on the whole unit interval.

Let $x\in[0,1]\setminus\cD$. Then there is a strictly monotone increasing sequence $\{d_n\}_{n\in\N}$ in $\cD$ such that $d_n\to x$ as $n\to\infty$. Because $f$ is strictly increasing on $\cD$ (see \cite{Aczel1989}) and $[a,b]$ is compact, we have that $f(d_n)$ is convergent. Let us define $f(x)$ as the limit of this sequence, that is
\[
f(x):=\lim\limits_{n\to\infty}f(d_n).
\]
If $y\in[0,1]\setminus\cD$ and $x\not= y$, then we can assume that $x<y$. Then, there exists a strictly monotone increasing dyadic sequence $\{s_n\}_{n\in\N}$ such that $s_n\to y$. If $j\in\N$ is large enough, we have that
\[
d_i<s_j, \qquad i\in\N,
\]
which entails
\[
f(d_i)<f(s_j)<f(y).
\]
Hence,
\[
f(x)=\lim\limits_{n\to\infty}f(d_n)\leq f(s_j)<f(y).
\]
In other words, as $f$ is strictly increasing on $\cD$, we have that its extension is strictly increasing on the whole unit interval. So, it is injective, which implies that $\overline{f(\cD)}$ is uncountable.

\textbf{Second step:} We prove that $\overline{f(\cD)}$ has uncountably many two-sided accumulation points, which can be defined as follows.

Let $H\subset \R$ be a set. A point $\alpha$ of $H$ is said to be
\begin{itemize}
\item \textit{isolated}, if there exists an $\varepsilon>0$ such that
\[
]\alpha-\varepsilon,\alpha+\varepsilon[~\cap~H =\emptyset,\]
\item \textit{two-sided accumulation point}, if for every
$\varepsilon>0$, we have
\[
]\alpha-\varepsilon,\alpha[~\cap~H\not=\emptyset\quad\mbox{ and }\quad ]\alpha,\alpha+\varepsilon[~\cap~H\not=\emptyset.
\]

\item \textit{one-sided accumulation point}, if it is neither isolated, nor two-sided accumulation point.
\end{itemize}

The set $\overline{f(\cD)}$ has at most countably many isolated points, otherwise there would be uncountably many disjoint, proper intervals in the compact interval $[a,b]$. From the same reason, $\overline{f(\cD)}$ has at most countably many half-sided accumulation points.

Since $\overline{f(\cD)}$ is uncountable, we have that it has at least uncountably many two-sided accumulation points.

\textbf{Third step:} Suppose that $f(\cD)$ is not dense in $[a,b]$. Then there is a point $z\in]0,1[$ such that
\begin{equation}\label{sequences_tending_to_z}
\lim\limits_{d_i\to z-}f(d_i)<\lim\limits_{D_i\to z+}f(D_i),
\end{equation}
where $\{d_i\}_{i\in\N}$ and $\{D_i\}_{i\in\N}$ are arbitrary dyadic sequences from $\cD$ tending to $z$ from the left and from the right, respectively. Let us define $X,Y\in[a,b]$ in the following way:
\[
X:=\lim\limits_{d_i\to z-}f(d_i),\qquad Y:=\lim\limits_{D_i\to z+}f(D_i).
\]
The values $X$ and $Y$ are independent from the choices of the sequences $\{d_i\}_{i\in\N}$ and $\{D_i\}_{i\in\N}$, respectively (see \cite{Aczel1989} on page 289).

\textbf{Fourth step:}
We prove that for arbitrary $\alpha\not=\beta$ two-sided accumulation points we have
\[
]\alpha\circ X,\alpha\circ Y[~\cap~ ]\beta\circ X,\beta\circ Y[ ~=\emptyset,
\]
where $X$ and $Y$ were defined in the previous step.

Let $\alpha,\beta\in[a,b],\ \alpha<\beta$ be two-sided accumulation points of $\overline{f(\cD)}$. Then, there are $d_\alpha,d_\beta\in\cD$ such that
\begin{equation}\label{E:alphabeta}
\alpha<f(d_\alpha)<f(d_\beta)<\beta.
\end{equation}
Moreover, there exist dyadic sequences $\{d_n\}_{n\in\N}$, and $\{D_n\}_{n\in\N}$ in the interval $[0,1]$ such that
\[
d_1<d_2<\cdots<z<\cdots<D_2<D_1,
\]
and
\[
\qquad f(d_n)\to X\qquad\mbox{and}\qquad f(D_n)\to Y
\]
as $n\to\infty$.
It also follows from the definitions of the sequences and from the strictly increasing property of $f$ on $\cD$ that
\begin{equation}\label{E:XY}
f(d_n)<X<Y<f(D_m)
\end{equation}
for every $n,m\in\N$.

If $n$ and $m$ are large enough, then
\[
\frac{D_m+d_\alpha}{2}<\frac{d_n+d_\beta}{2}.
\]
Applying \eqref{identity_on_dyadics} and the strictly increasing property of $f$ on $\cD$, we can write
\begin{eqnarray*}
f(d_n)\circ f(d_\alpha)<f(D_m)\circ f(d_\alpha)=f\left(\frac{D_m+d_\alpha}{2}\right)<\\
\\
f\left(\frac{d_n+d_\beta}{2}\right)=f(d_n)\circ f(d_\beta)<f(D_m)\circ f(d_\beta).
\end{eqnarray*}
It follows then
\[
]f(d_n)\circ f(d_\alpha),f(D_m)\circ f(d_\alpha)[~\cap~ ]f(d_n)\circ f(d_\beta),f(D_m)\circ f(d_\beta)[ ~=\emptyset.
\]
By the chain of inequalities \eqref{E:XY}, we have that
\begin{eqnarray*}
]X\circ f(d_\alpha),Y\circ f(d_\alpha)[ ~\subset~ ]f(d_n)\circ f(d_\alpha),f(D_m)\circ f(d_\alpha)[,&\\
]X\circ f(d_\beta),Y\circ f(d_\beta)[~ \subset ~]f(d_n)\circ f(d_\beta),f(D_m)\circ f(d_\beta)[,&
\end{eqnarray*}
which implies that
\[
]X\circ f(d_\alpha),Y\circ f(d_\alpha)[~\cap~ ]X\circ f(d_\beta),Y\circ  f(d_\beta)[ ~=\emptyset.
\]
Even more so, applying \eqref{E:alphabeta}, we have
\[
]X\circ \alpha,Y\circ \alpha[~\cap~ ]X\circ \beta,Y\circ \beta[ ~=\emptyset.
\]

It means, that $[a,b]$ contains uncountably many accumulation points and hence uncountably many disjoint, proper intervals, which is impossible. So, $f(\cD)$ is necessarily dense in $[a,b]$.

Now, we are going to prove, that the density of $f(\cD)$ implies the continuity of $f$. Let $x\in[0,1]\setminus\cD$ and let $\{d_n\}_{n\in\N},\ \{D_n\}_{n\in\N}$ be dyadic sequences from $\cD$ such that $d_n$ tends to $x$ strictly monotone increasingly from the left, and $D_n$ tends to $x$ strictly monotone decreasingly from the right. Let us consider the values
\[
L:=\lim\limits_{n\to\infty}f(d_n),\qquad R:=\lim\limits_{n\to\infty}f(D_n).
\]
Because of the strictly increasing property of $f$ it is clear that $L\leq R$.
If $L<R$, then
\[
f(\cD)~\cap ~]L,R[~=\emptyset,
\]
which contradicts to the fact that $f(\cD)$ is dense in $[a,b]$. Consequently, we obtain that $f$ is continuous and strictly monotone increasing on $[0,1]$, which entails that $f$ fulfils  \eqref{identity_on_dyadics} on $[0,1]$. Thus, we get the desired form \eqref{Eq_foalak} of $F$ on the whole interval $[a,b]$.
\end{proof}

\begin{thm}\label{fottetel}
Let $F:I^2\to I$ be a reflexive, partially strictly increasing, symmetric and bisymmetric mapping. Then $F$ is continuous.
\end{thm}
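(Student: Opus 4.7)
The plan is to reduce Theorem \ref{fottetel} to Theorem \ref{T:bisymmetryimpliescontinuity} by the standard device of localising on compact subintervals. Since continuity is a local property, it is enough to exhibit, around any point $(x_0,y_0)\in I^2$, a compact square $[a,b]^2\subseteq I^2$ on which $F$ is continuous, and then observe that every convergent sequence in $I^2$ tending to $(x_0,y_0)$ eventually lies in such a square.

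The crucial input is Observation \ref{Obv:F_is_a_mean}: because $F$ is reflexive and partially strictly increasing, it is a strict mean on $I$. Consequently, whenever $a,b\in I$ with $a<b$, we have
\[
F\bigl([a,b]^2\bigr)\subseteq [a,b],
\]
so the restriction $F|_{[a,b]^2}$ is a well-defined self-map of $[a,b]^2$. This restriction inherits reflexivity, partial strict monotonicity, symmetry and bisymmetry from $F$, hence fulfils all hypotheses of Theorem \ref{T:bisymmetryimpliescontinuity}. That theorem then delivers the representation \eqref{Eq_foalak} for $F|_{[a,b]^2}$ with a continuous $f$, and in particular $F|_{[a,b]^2}$ is continuous.

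To conclude, I fix $(x_0,y_0)\in I^2$ and choose $a,b\in I$ with $a\leq\min\{x_0,y_0\}$ and $b\geq\max\{x_0,y_0\}$, picking strict inequalities whenever the corresponding coordinate is not an extremum of $I$, and taking equality at endpoints of $I$ when no room is available (this is the only mildly delicate point, caused by $I$ possibly being half-open or unbounded). Any sequence in $I^2$ converging to $(x_0,y_0)$ lies eventually in $[a,b]^2$, so the continuity of $F|_{[a,b]^2}$ established above yields the continuity of $F$ at $(x_0,y_0)$. Since $(x_0,y_0)$ was arbitrary, $F$ is continuous on $I^2$.

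I anticipate no serious obstacle: the entire argument is a localisation, with the mean property of $F$ providing exactly the invariance $F([a,b]^2)\subseteq[a,b]$ that is needed to invoke the compact-interval case.
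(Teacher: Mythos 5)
Your proposal is correct and follows essentially the same route as the paper, which likewise reduces to the compact case of Theorem \ref{T:bisymmetryimpliescontinuity} by exhausting $I$ with compact subintervals; you merely spell out the details (the invariance $F([a,b]^2)\subseteq[a,b]$ via the strict mean property and the choice of $[a,b]$ around a given point) that the paper delegates to a footnote citing Acz\'el--Dhombres.
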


\begin{proof}
If $I$ is compact then we have the statement from Theorem \ref{T:bisymmetryimpliescontinuity}. Otherwise, one can approximate $I$ by a sequence of compact subintervals of $I$.\footnote{For further details see  \cite[the end of the proof of Theorem 1. on page 290-291]{Aczel1989}.
}
\end{proof}

As an immediate consequence of the previous theorem we get a more natural new characterization theorem of quasi-arithmetic means.
\begin{cor}
A function $F\colon I^2\to I$ is a quasi-arithmetic mean if and only if it is reflexive, partially strictly increasing, symmetric and bisymmetric.
\end{cor}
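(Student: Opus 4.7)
The plan is to prove the equivalence by leveraging the two pieces already in hand: Theorem \ref{fottetel} (which upgrades the algebraic/monotonicity hypotheses to continuity) and the classical Theorem \ref{Aczel1} of Acz\'el (which characterizes the same class under the extra assumption of continuity). Neither direction should require new ideas beyond stitching these together.

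For the forward implication, I would assume that $F$ is a quasi-arithmetic mean, so that \eqref{eqa1} holds for some continuous strictly increasing $f\colon[0,1]\to I$. I would then verify the four properties directly from the formula: reflexivity is $F(x,x)=f(f^{-1}(x))=x$; partial strict monotonicity (in fact, partial strict increasingness) follows because $f$ and $f^{-1}$ are strictly increasing and the arithmetic mean in one variable is strictly increasing; symmetry is obvious since $(f^{-1}(x)+f^{-1}(y))/2$ is symmetric in $x,y$; and bisymmetry is the standard computation
\[
F\bigl(F(x,y),F(u,v)\bigr)=f\!\left(\tfrac{f^{-1}(x)+f^{-1}(y)+f^{-1}(u)+f^{-1}(v)}{4}\right)=F\bigl(F(x,u),F(y,v)\bigr).
\]
These are routine and can be dispatched in a single paragraph.

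For the reverse implication, I would assume that $F\colon I^2\to I$ is reflexive, partially strictly increasing, symmetric, and bisymmetric. By Theorem \ref{fottetel}, $F$ is automatically continuous on $I^2$. All five hypotheses of Theorem \ref{Aczel1} are therefore satisfied (note that partially strictly increasing is a special case of partially strictly monotonic, so no further argument is needed), and the conclusion of that theorem gives the desired representation \eqref{eqa1} with a continuous strictly increasing $f$.

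There is no real obstacle here: the substantive content, i.e.\ producing continuity from the purely algebraic hypothesis of bisymmetry together with reflexivity, symmetry and partial strict increasingness, has already been carried out in Theorem \ref{T:bisymmetryimpliescontinuity} and Theorem \ref{fottetel}. The only point to double-check is that the hypotheses of Theorem \ref{Aczel1} are literally those we have after invoking Theorem \ref{fottetel}; since \textit{partially strictly increasing} implies \textit{partially strictly monotonic}, this match is immediate.
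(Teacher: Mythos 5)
Your proposal is correct and follows essentially the same route as the paper: the forward direction is a routine verification from the formula \eqref{eqa1}, and the reverse direction combines Theorem \ref{fottetel} (continuity from the algebraic hypotheses) with Acz\'el's classical characterization, Theorem \ref{Aczel1}. Your write-up is just slightly more explicit about checking that the hypotheses match.
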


\begin{proof}
If $F$ is of the form  \eqref{eqa1}, then it is trivially reflexive, partially strictly increasing, symmetric and bisymmetric.

The opposite direction comes from Theorem \ref{fottetel}.
\end{proof}

\section{Further directions}

\subsection*{Problems connecting to the bivariate case}
The following Theorem can be found in \cite[p. 296]{Aczel1989}.
\begin{thm}\label{Aczel3}
Let $F:I^2\to I$ be a partially strictly monotonic and bisymmetric continuous mapping.  Then
\begin{enumerate}
    \item there are constants $A,B,C\in\R,\ AB\not=0$, and a continuous, strictly monotonic function $f\colon J\to I$ that satisfies
\begin{equation}\label{eqa1x2}
    F(x,y)=f \left(Af^{-1}(x)+Bf^{-1}(y)+C\right),\quad x,y\in I,
\end{equation}
    \item $F$ is reflexive if and only if there is a continuous, strictly monotonic $f\colon J\to I$ that satisfies
\begin{equation}\label{eqa1x3}
    F(x,y)=f \left(rf^{-1}(x)+(1-r)f^{-1}(y)\right),\quad x,y\in I, r\in\R\setminus\{0,1\},
\end{equation}
\end{enumerate}
where $J\subset\R$ is a proper interval.
\end{thm}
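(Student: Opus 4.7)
The plan is to follow Acz\'el's strategy: conjugate $F$ into an affine binary operation, and then specialize to the reflexive case.

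\textbf{Part (1).} The target identity~\eqref{eqa1x2} is equivalent to the statement that the conjugated operation $G(u,v):=f^{-1}(F(f(u),f(v)))$ is the affine map $(u,v)\mapsto Au+Bv+C$ on $J^2$. I would construct $f$ by mimicking the dyadic scheme of the proof of Theorem~\ref{T:bisymmetryimpliescontinuity}, but with the symmetric midpoint $(s+t)/2$ replaced by the asymmetric affine index map $(s,t)\mapsto As+Bt+C$. Fix two base points $a<b$ in $I$, set $f(0):=a$ and $f(1):=b$, and determine the constants $A,B,C$ intrinsically from the values of $F$ on $\{a,b\}^2$. Extend $f$ recursively over the set $T\subset\R$ generated by iterating this affine rule on the already-defined indices. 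Bisymmetry is used precisely to show that two distinct iteration paths leading to the same index produce the same value of $f$; this is the well-definedness step. Partial strict monotonicity then forces $f$ to be strictly monotonic on $T$.

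Once $f$ is defined and strictly monotonic on $T$, I would establish density of $T$ in a proper interval $J\subset\R$ along the lines of the accumulation-point argument in the proof of Theorem~\ref{T:bisymmetryimpliescontinuity} (here the non-degeneracy $AB\neq 0$ prevents the orbit from collapsing to a finite or discrete set), and then use continuity of $F$ to extend $f$ continuously to the whole of $J$. Passing to the limit in the defining identity gives $F(f(u),f(v))=f(Au+Bv+C)$ on $J^2$, which after substituting $x=f(u)$, $y=f(v)$ is~\eqref{eqa1x2}; the strict monotonicity of $F$ in each variable forces both $A\neq 0$ and $B\neq 0$.

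\textbf{Part (2).} The direction ``form $\Rightarrow$ reflexive'' is immediate, since $F(x,x)=f(rf^{-1}(x)+(1-r)f^{-1}(x))=f(f^{-1}(x))=x$. Conversely, if $F$ is reflexive, apply Part~(1) to obtain $F(x,y)=f(Af^{-1}(x)+Bf^{-1}(y)+C)$. Setting $y=x$,
\[
x = f\bigl((A+B)f^{-1}(x)+C\bigr)\qquad(x\in I),
\]
so $f^{-1}(x)=(A+B)f^{-1}(x)+C$ for every $x\in I$. Since $f^{-1}$ is non-constant, this forces $A+B=1$ and $C=0$; writing $r:=A$, the hypothesis $AB\neq 0$ becomes $r\in\R\setminus\{0,1\}$, which yields~\eqref{eqa1x3}.

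\textbf{Main obstacle.} The hard part will be the density and well-definedness argument in Part~(1). For the symmetric midpoint rule used in Theorem~\ref{T:bisymmetryimpliescontinuity}, density of dyadics in $[0,1]$ is automatic; for the asymmetric affine rule $(s,t)\mapsto As+Bt+C$ the forward orbit of the generated affine semigroup must be analysed carefully, and the non-degeneracy $AB\neq 0$ is essential to avoid degenerate behaviour. A related subtlety is to identify the constants $A,B,C$ intrinsically in terms of $F$ (they should be determined by the values $F(a,a), F(a,b), F(b,a), F(b,b)$) rather than being free parameters fitted after the fact.
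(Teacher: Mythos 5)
The paper does not prove this theorem itself: it is quoted from Acz\'el--Dhombres \cite[p.~296]{Aczel1989}, and the route indicated there is to \emph{reduce} the general case to the reflexive, symmetric case (Theorem~\ref{Aczel1}) via auxiliary continuous operations such as $g_a(z)=F(F(a,z),F(z,a))$, extracting the constants $A,B,C$ only \emph{after} the generator $f$ has been produced by that reduction. Your Part~(2) is correct as far as it goes: given Part~(1), setting $y=x$ and using that $f^{-1}$ is non-constant does force $A+B=1$ and $C=0$, and $AB\neq0$ translates into $r\notin\{0,1\}$.

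The genuine gap is in Part~(1), and you have in fact identified it yourself without resolving it. The recursion $f(As+Bt+C)=f(s)\circ f(t)$ cannot even be set up until $A$, $B$, $C$ are known, but these constants are determined by $F$ only through the representation you are trying to construct; they are \emph{not} computable from $F(a,a)$, $F(a,b)$, $F(b,a)$, $F(b,b)$. The corner values constrain $A,B$ only weakly: injectivity of $f$ shows $F(a,b)=F(b,a)$ iff $A=B$, but when $F(a,b)\neq F(b,a)$ the ratio $A/B$ cannot be read off without already knowing $f$ at interior points --- which is circular. This is exactly why the classical argument runs in the opposite order: first build from $F$ a symmetric, reflexive, bisymmetric operation (symmetrizing and composing with the inverse of the diagonal $x\mapsto F(x,x)$), apply Theorem~\ref{Aczel1} to get $f$, and only then locate $A$, $B$, $C$ inside the $f$-parametrization. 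A second, related hole: your well-definedness and density claims for the orbit of the affine rule are asserted ``along the lines of'' the dyadic midpoint argument, but that argument leans on reflexivity and symmetry (every dyadic arises as a midpoint of previously defined dyadics in a controlled way); for an asymmetric affine rule with a priori unknown, possibly irrational coefficients the coherence relations are of a different nature and would need their own proof. Finally, note that continuity of $F$ is a \emph{hypothesis} here, so the accumulation-point machinery of Theorem~\ref{T:bisymmetryimpliescontinuity} that you invoke is not needed for this statement at all.
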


The proof of Theorem \ref{Aczel3} is based on the reflexive, symmetric case of Theorem \ref{Aczel1} and it relies on the fact that some functions are continuous such as $g_a(z)=F(F(a,z),F(z,a))$.  This naturally motivates the following open problems.

\begin{que}
 Is that true that every partially strictly monotonic and bisymmetric function $F:I^2\to I$ is automatically continuous, that satisfies \eqref{eqa1x2}?
\end{que}

\begin{que}
 Is that true that every reflexive, partially strictly monotonic and bisymmetric function $F:I^2\to I$ is automatically continuous, that satisfies \eqref{eqa1x3}?
\end{que}

\subsection*{Problems connecting to the $n$-arity case}

Analogously we can extend every property of $F$ introduced in Section \ref{S:preliminary} to $n$-ary functions. Hence we can talk about reflexivity, partially (strict) monotonicity, continuity of $G:I^n\to I$.

Symmetry is defined as $$G(x_1, \dots, x_n)=G(x_{\sigma(1)},\dots, x_{\sigma(n)})$$ holds for all $x_1,\dots, x_n$ and $\sigma\in S_n$, where $S_n$ is the permutation group of $n$ elements.

Bisymmety is defined as \begin{align*}
    &G(G(x_{1,1},\dots, x_{1,n}),G(x_{2,1},\dots, x_{2,n}),\dots, G(x_{n,1},\dots, x_{n,n}))=\\
    &G(G(x_{1,1},\dots, x_{n,1}),G(x_{1,2},\dots, x_{n,2}),\dots, G(x_{1,n},\dots, x_{n,n})),
\end{align*}
 for all $x_{i,j}\in[a,b]$. This property has a significant role in economics, especially, in the theory of aggregation functions (see e.g. \cite{Aczel1996}, \cite{Aczel2000}, \cite{Grabisch2009}, \cite{Maksa1999a}).

The following general questions arise naturally as $n$-ary analogue.
\begin{que}
Let $G:I^n\to I$ be a partially strictly increasing and bisymmetric function. Is that true that $G$ is continuous.
\end{que}
This problem seems too general at this moment.
Therefore we formalize the following direct analogue of our main result.
\begin{que}
Let $G:I^n\to I$ be a reflexive, partially strictly increasing, symmetric and bisymmetric function. Is that true that there is a proper interval $J\subset\R$ and a continuous function $f\colon J\to I$, such that
\[
G(x_1, \dots, x_n)=f\left(\frac{f^{-1}(x_1)+\dots +f^{-1}(x_n)}{n}\right),\qquad x_1, \dots,x_n\in I,
\] hence $G$ is continuous?
\end{que}
\section*{Acknowledgement}
The second author was supported by Premium Postdoctoral Fellowship of the Hungarian Academy of Sciences and by the Hungarian National	Foundation for Scientific Research, Grant No. K124749.

The third author was supported by the Hungarian Academy of Sciences.


\end{document}